

\documentclass[final,1p,times]{elsarticle}


\usepackage{amsthm,amsmath,amssymb,mathrsfs,amsfonts,graphicx,graphics,latexsym,exscale,cmmib57,dsfont,bbm,amscd,ulem}
\usepackage{color,soul}
\usepackage[colorlinks=true]{hyperref}



\frenchspacing
\newcommand{\bA}{\boldsymbol{A}}
\newcommand{\bB}{\boldsymbol{B}}
\newcommand{\bK}{\boldsymbol{K}}
\newcommand{\brho}{\boldsymbol{\rho}}
\newcommand{\bcdot}{\boldsymbol{\cdot}}

\newcommand{\lam}[1]{\lambda_}

\newcommand{\bea}{\begin{eqnarray}}
\newcommand{\eea}{\end{eqnarray}}
\newcommand{\bean}{\begin{eqnarray*}}
\newcommand{\eean}{\end{eqnarray*}}
\newtheorem{prob}{Problem}

\newtheorem*{thm-A}{Theorem A}
\newtheorem*{thm-B}{Theorem B}
\newtheorem*{thm-C}{Theorem C}
\newtheorem*{thm-D}{Theorem D}
\newtheorem*{thm-E}{Theorem E}
\newtheorem*{thm-F}{Theorem F}
\newtheorem*{thm-G}{Theorem G}
\newtheorem*{thm-H}{Theorem H}

\newtheorem{theorem}{Theorem}
\newtheorem{cor}[theorem]{Corollary}
\newtheorem{prop}[theorem]{Proposition}

\theoremstyle{definition}

\newtheorem{example}[theorem]{Example}

\numberwithin{equation}{section}

\journal{Linear Algebra and its Applications}%

\begin{document}

\begin{frontmatter}

\title{Some criteria for spectral finiteness of a finite subset of the real matrix space $\mathbb{R}^{d\times d}$\tnoteref{label1}}

\tnotetext[label1]{Project was supported partly by National Natural Science Foundation of China (No. 11071112) and PAPD of Jiangsu Higher Education Institutions.}%

\author{Xiongping Dai}
\ead{xpdai@nju.edu.cn}
\address{Department of Mathematics, Nanjing University, Nanjing 210093, People's Republic of China}

\begin{abstract}
In this paper, we present some checkable criteria for the spectral finiteness of a finite subset of the real $d\times d$ matrix space $\mathbb{R}^{d\times d}$, where $2\le d<\infty$.
\end{abstract}

\begin{keyword}
Generalized/joint spectral radius\sep spectral finiteness.

\medskip
\MSC[2010] Primary 15B52, 15A18; Secondary 65F15\sep 93D20.
\end{keyword}

\end{frontmatter}

\section{Introduction}\label{sec1}%

Throughout this paper, by $\rho(M)$ we mean the usual spectral radius of a real square matrix $M\in\mathbb{R}^{d\times d}$, where $2\le d<+\infty$. For an arbitrary finite family of real matrices
\bean
\bA=\{A_1,\dotsc,A_K\}\subset\mathbb{R}^{d\times d},
\eean
its \textit{generalized spectral radius $\pmb{\rho}(\bA)$}, first introduced by Daubechies and Lagarias in \cite{DL92-01}, is defined by
\bean
\pmb{\rho}(\bA)=\sup_{n\ge1}\max_{M\in\bA^n}\sqrt[n]{\rho(M)}\quad\left(\,=\limsup_{n\to+\infty}\max_{M\in\bA^n}\sqrt[n]{\rho(M)}\right),
\eean
where
\begin{equation*}
\bA^n=\left\{\stackrel{n\textrm{-folds}}{\overbrace{M_1\dotsm M_n}}\,\bigg\vert\, M_i\in\bA\textrm{ for }1\le i\le n\right\}\quad \forall n\ge1.
\end{equation*}
According to the Berger-Wang spectral formula~\cite{BW92} (also see \cite{Els, Dai-JMAA} for simple proofs), this quantity is very important for many pure and applied mathematics branches like numerical computation of matrices, differential equations, coding theory, wavelets, stability analysis of random matrix, control theory, combinatorics, and so on. See, for example, \cite{Bar, Jun}.

Therefore, the following finite-step realization property for the accurate computation of $\pmb{\rho}(\bA)$ becomes very interesting and important, because it makes the stability question algorithmically decidable; see, e.g., \cite[Proposition~2.9]{Bar}.

\begin{prob}\label{prob1}
Does there exist a finite-length word which realize $\pmb{\rho}$ for $\bA$?
In other words, does there exist any $M^*\in\bA^{n^*}$ such that $\pmb{\rho}(\bA)=\sqrt[\leftroot{-2}\uproot{2}n^*]{\rho(M^*)}$, for some $n^*\ge1$?
\end{prob}

If one can find such a word $M^*$ for some $n^*\ge1$, then $\bA$ is said to possess \textit{the spectral finiteness}.
This spectral finiteness, for any bounded $\bA$, was conjectured respectively by Pyatnitski\v{i} (see, e.g.,~\cite{PR91,SWMW07}), Daubechies and Lagarias in~\cite{DL92-01}, Gurvits in~\cite{Gur95}, and by Lagarias and Wang in~\cite{LW95}. It has been disproved first by Bousch and Mairesse in \cite{BM}, and then by Blondel \textit{et al.} in \cite{BTV}, by Kozyakin in~\cite{Koz05, Koz07}, all offered the existence of counterexamples in the case where $d=2$ and $K=2$; moreover, an explicit expression for such a counterexample has been found in the recent work of Hare \textit{et al.}~\cite{HMST}.

However, an affirmative solution to Problem~\ref{prob1} is very important; this is because it implies an effective computation of $\pmb{\rho}(\bA)$ and decidability of stability by only finitely many steps of computations. There have been some sufficient (and necessary) conditions for the spectral finiteness for some types of $\bA$, based on and involving Barabanov norms, polytope norms, ergodic theory or some limit properties of $\bA$, for example, in Gurvits~\cite{Gur95}, Lagarias and Wang~\cite{LW95}, Guglielmi, Wirth and Zennaro~\cite{GWZ05}, Kozyakin~\cite{Koz07}, Dai, Huang and Xiao~\cite{DHX-pro}, and Dai and Kozyakin~\cite{DK}. But these theoretic criteria seem to be difficult to be directly employed to judge whether or not an explicit family $\bA$ or even a pair $\{A,B\}\subset\mathbb{R}^{2\times 2}$ have the spectral finiteness.

From literature, as far we know, there are only few results on such an explicit family of matrices $\bA$ as follows.

\begin{thm-A}[{Theys~\cite{Theys}, also \cite{JB08}}]
If $A_1,\dotsc,A_K\in\mathbb{R}^{d\times d}$ are all symmetric matrices, then $\bA$ has the spectral finiteness such that
$\brho(\bA)=\max_{1\le k\le K}\rho(A_k)$.
\end{thm-A}

A more general version of this theorem is the following.

\begin{thm-B}[{Barabanov~\cite[Proposition~2.2]{Bar}}]
If a finite set $\bA\subset\mathbb{R}^{d\times d}$ only contains normal matrices, then the spectral finiteness holds.
\end{thm-B}

For a matrix $A$, by $A^T$ we mean its transpose matrix. Another generalization of Theorem~A is the following.

\begin{thm-C}[{Plischke and Wirth~\cite[Proposition~18]{PW:LAA08}}]
If $\bA=\{A_1,\dotsc,A_K\}\in\mathbb{R}^{d\times d}$ is symmetric in the sense of $A_k^T\in\bA$ for all $1\le k\le K$, then $\bA$ has the spectral finiteness.
\end{thm-C}

Jungers and Blondel~\cite{JB08} proved that for a pair of $\{0,1\}$-matrices of $2\times 2$, the spectral finiteness holds.
A more general result than this is

\begin{thm-D}[{Cicone \textit{et al.}~\cite{CGSZ10}}]
If $A_1$ and $A_2$ are $2\times 2$ sign-matrices; that is, $A_1,A_2$ belong to $\{0,\pm1\}^{2\times 2}$, then the spectral finiteness holds for $\{A_1,A_2\}$.
\end{thm-D}

The followings are other different type of results.

\begin{thm-E}[{Br\"{o}ker and Zhou~\cite{BZ}}]
If $\bA=\{A, B\}\subset\mathbb{R}^{2\times 2}$ satisfies $\det(A)<0$ and $\det(B)<0$, then
$\brho(\bA)=\max\left\{\rho(A),\rho(B), \sqrt{\rho(AB)}\right\}$.
\end{thm-E}

\begin{thm-F}[{M\"{o}{\ss}ner~\cite{Mo}}]
If $\bA=\{L, R\}\subset\mathbb{R}^{2\times 2}$ satisfies
$L=\left(\begin{smallmatrix}0&1\\1&0\end{smallmatrix}\right)R\left(\begin{smallmatrix}0&1\\1&0\end{smallmatrix}\right)$,
then $\bA$ has the spectral finiteness with $\brho(\bA)=\max\left\{\rho(L), \sqrt{\rho(LR)}\right\}$.
\end{thm-F}

\begin{thm-G}[{Guglielmi \textit{et al.}~\cite[Theorem~4]{GMV}}]
Let $\bA=\{A,B\}$ satisfy
\begin{equation*}
A=\begin{pmatrix}a&b\\c&d\end{pmatrix}\quad\textrm{and}\quad B=\begin{pmatrix}a&-b\\-c&d\end{pmatrix},\quad \textrm{where }a,b,c,d\in\mathbb{R}.
\end{equation*}
Then $\bA$ has the spectral finiteness such that
\begin{equation*}
\brho(\bA)=
\begin{cases}
\rho(A)=\rho(B)& \textrm{if }bc\ge0,\\
\sqrt{\rho(AB)}& \textrm{if }bc<0.\end{cases}
\end{equation*}
\end{thm-G}

\begin{thm-H}[Dai \textit{et al.}~\cite{DHLX11}]
If one of $A, B\in\mathbb{R}^{d\times d}$ is of rank one, then there holds the spectral finiteness property for $\{A,B\}$.
\end{thm-H}

We will present a new criterion for the spectral finiteness of a finite subset of $\mathbb{R}^{d\times d}$,  see Theorem~\ref{thm1} in Section~\ref{sec2}, which generalizes Theorems~A,~C and G. From this we can obtain some checkable sufficient conditions for the spectral finiteness.

Finally in Section~\ref{sec3}, we will improve the main theorem of Kozyakin~\cite{Koz07} to get a sufficient and necessary condition for the spectral finiteness of a type of $2$-by-$2$ matrix set $\bA$; see Theorem~\ref{thm8}.

\section{Symmetric optimal words and the spectral finiteness}\label{sec2}

We let $\bA=\{A_1,\dotsc,A_K\}\subset\mathbb{R}^{d\times d}$ be an arbitrarily given set, where $2\le K<\infty$ and $2\le d<\infty$. By $\|\cdot\|$, we denote the usual euclidean norm of $\mathbb{R}^{d\times d}$. Let 
$$
\bK=\{1,2,\dotsc,K\}\quad \textrm{and}\quad
\bK^n=\stackrel{n\textit{-folds}}{\overbrace{\bK\times\dotsm\times\bK}}. 
$$
For any word $w=(k_1,\dotsc,k_n)\in\bK^n$ of length $n$, we write $\bA(w)=A_{k_1}\dotsm A_{k_n}\in\bA^n$.

A word $w^*=(k_1^*,\dotsc,k_n^*)\in\bK^n$ of length $n$ is called an \textit{$(\bA,n)$-optimal word}, provided that it satisfies condition:
\begin{equation*}
\|\bA(w^*)\|=\max_{w\in\bK^n}\|\bA(w)\|.
\end{equation*}

This section is mainly devoted to proving the following criterion for the spectral finiteness of $\bA$, which generalizes Theorems~A and C and the first part of Theorem~G.

\begin{theorem}\label{thm1}
Let $\bA=\{A_1,\dotsc,A_K\}\subset\mathbb{R}^{d\times d}$. If there exists an $(\bA,n^*)$-optimal word $w^*$ with $\bA(w^*)^T\bA(w^*)\in\bA^{2n^*}$ $(\textrm{resp. } \bA(w^*)\bA(w^*)^T\in\bA^{2n^*})$ for some $n^*\ge1$, then $\bA$ has the spectral finiteness such that
\begin{equation*}
\brho(\bA)=\sqrt[\leftroot{-3}n^*]{\|\bA(w^*))\|}=\sqrt[\leftroot{-3}2n^*]{\rho\left(\bA(w^*)^T\bA(w^*)\right)}\quad\left(=\sqrt[\leftroot{-3}2n^*]{\rho\left(\bA(w^*)\bA(w^*)^T\right)}\right).
\end{equation*}
\end{theorem}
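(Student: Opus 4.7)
My strategy is to sandwich $\brho(\bA)$ between the two sides of the claimed equality. The upper bound is completely general (it uses only sub-multiplicativity of the operator norm together with the Berger--Wang formula recalled in the introduction), while the lower bound is precisely where the structural hypothesis $\bA(w^*)^T\bA(w^*)\in\bA^{2n^*}$ enters. Spectral finiteness will then come for free, since the word of length $2n^*$ delivered by the lower bound has finite length and realizes the value.

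\textbf{Upper bound.} Let $a_n:=\max_{w\in\bK^n}\|\bA(w)\|$. Sub-multiplicativity $\|PQ\|\le\|P\|\|Q\|$ forces $a_{m+n}\le a_m a_n$, so Fekete's lemma guarantees
\begin{equation*}
\lim_{n\to\infty} a_n^{1/n} \;=\; \inf_{n\ge1} a_n^{1/n},
\end{equation*}
and by the Berger--Wang spectral formula this common value equals $\brho(\bA)$. Taking $n=n^*$ in the infimum and using optimality of $w^*$ gives $\brho(\bA)\le\sqrt[n^*]{\|\bA(w^*)\|}$.

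\textbf{Lower bound.} The hypothesis supplies a word $v^*\in\bK^{2n^*}$ with $\bA(v^*)=\bA(w^*)^T\bA(w^*)$. Since $\|\cdot\|$ is the operator norm induced by the euclidean norm on $\mathbb{R}^d$, one has the textbook identity $\|M\|^2=\rho(M^TM)$, so
\begin{equation*}
\rho(\bA(v^*)) \;=\; \rho\bigl(\bA(w^*)^T\bA(w^*)\bigr) \;=\; \|\bA(w^*)\|^2.
\end{equation*}
Plugging the single length-$2n^*$ word $v^*$ into the definition of $\brho$ gives
\begin{equation*}
\brho(\bA) \;\ge\; \sqrt[2n^*]{\rho(\bA(v^*))} \;=\; \sqrt[n^*]{\|\bA(w^*)\|},
\end{equation*}
which combined with the upper bound forces equality throughout and exhibits $v^*$ as the finite-length word realizing $\brho(\bA)$.

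\textbf{Parenthesised variant and main difficulty.} If instead $\bA(w^*)\bA(w^*)^T\in\bA^{2n^*}$, the argument applies verbatim, because $\bA(w^*)\bA(w^*)^T$ and $\bA(w^*)^T\bA(w^*)$ share the same non-zero eigenvalues and hence the same spectral radius $\|\bA(w^*)\|^2$. The proof itself is remarkably short; the only delicacy is adhering to the convention that $\|\cdot\|$ is the operator (spectral) norm, the one norm for which $\|M\|^2=\rho(M^TM)$ is available. The genuine difficulty therefore does not lie in Theorem~\ref{thm1} per se, but in producing verifiable algebraic conditions that force $\bA(w^*)^T\bA(w^*)\in\bA^{2n^*}$ in concrete explicit families, which is what will be required to recover Theorems~A, C, and the first part of~G as corollaries.
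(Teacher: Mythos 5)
Your proof is correct and follows essentially the same route as the paper: the chain $\brho(\bA)\le\sqrt[n^*]{\|\bA(w^*)\|}=\sqrt[2n^*]{\rho(\bA(w^*)^T\bA(w^*))}\le\brho(\bA)$, with the upper bound from Berger--Wang (plus Fekete, which you make explicit and the paper leaves implicit) and the lower bound from the hypothesis that $\bA(w^*)^T\bA(w^*)$ is itself a word in $\bA^{2n^*}$. Nothing to add.
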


\begin{proof}
Let $w^*$ be an $(\bA,n^*)$-optimal word of length $n^*$, which is such that $\bA(w^*)^T\bA(w^*)\in\bA^{2n^*}$ (resp. $\bA(w^*)\bA(w^*)^T\in\bA^{2n^*}$), for some $n^*\ge1$. Then from the Berger-Wang spectral formula~\cite{BW92}, it follows that
\begin{equation*}\begin{split}
\brho(\bA)&=\inf_{n\ge1}\max_{w\in\bK^n}\sqrt[n]{\|\bA(w)\|}\le\sqrt[\leftroot{-3}n^*]{\|\bA(w^*))\|}\\
&=\sqrt[\leftroot{-3}2n^*]{\rho\left(\bA(w^*)^T\bA(w^*)\right)}\\
&=\sqrt[\leftroot{-3}2n^*]{\rho\left(\bA(w^*)\bA(w^*)^T\right)}\\
&\le\brho(\bA).
\end{split}\end{equation*}
This implies the desired result and ends the proof of Theorem~\ref{thm1}.
\end{proof}

For the case where $\bA$ is symmetric as in Theorem~C, one can find an $(\bA,1)$-optimal word $w^*$ such that both $\bA(w^*)^T\bA(w^*)$ and $\bA(w^*)\bA(w^*)^T$ belong to $\bA^{2}$. On the other hand, the following simple example shows our Theorem~\ref{thm1} is an essential extension of Theorem~C.

\begin{example}\label{example2}
Let $\bA$ consist of the following three matrices:
\begin{equation*}
A_1=\begin{pmatrix}1&1&2\\0&1&1\\0&0&1\end{pmatrix},\quad A_2=\begin{pmatrix}1&0&0\\1&1&0\\2&1&1\end{pmatrix},\quad \textrm{and}\quad A_3=\begin{pmatrix}\cos\alpha&\sin\alpha&0\\-\sin\alpha&\cos\alpha&0\\0&0&\sqrt{\frac{3-\sqrt{5}}{2}}\end{pmatrix}.
\end{equation*}
It is evident that $\bA$ is not symmetric. However, $w^*=(1)$ is an $(\bA,1)$-optimal word such that
\begin{equation*}
\bA(w^*)^T\bA(w^*)=A_2A_1\in\bA^{2}
\end{equation*}
and so $\brho(\bA)=\sqrt{\rho(A_2A_1)}$.
\end{example}

As a consequence of Theorem~\ref{thm1}, we can obtain the following checkable criterion for the spectral finiteness of a kind of $\bA$.

\begin{cor}\label{cor3}
Let $\bA$ consist of the following $K+2$ matrices:
\begin{equation*}
A_0=\begin{pmatrix}a&b\\c&d\end{pmatrix},\;A_1=\begin{pmatrix}a_{11}&r_1b\\r_1c&d_{11}\end{pmatrix},\;\dotsc,\;A_K=\begin{pmatrix}a_{KK}&r_Kb\\r_Kc&d_{KK}\end{pmatrix},\; \textrm{and}\; B=\begin{pmatrix}b_{11}&r\sqrt{|b|}\\r\sqrt{|c|}&b_{22}\end{pmatrix},
\end{equation*}
where $r, r_1,\dotsc,r_K$ are all constants. If $bc\ge0$ and $\|B\|\le\max_{0\le i\le K}\rho(A_i)$,
then $\bA$ has the spectral finiteness and moreover
\begin{equation*}
\brho(\bA)=\max_{0\le k\le K}\rho(A_k).
\end{equation*}
\end{cor}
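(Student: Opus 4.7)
My plan is to invoke Theorem~\ref{thm1} after a diagonal change of basis that makes $A_0,A_1,\dots,A_K$ simultaneously symmetric. Assume first $bc>0$ and set $D=\mathrm{diag}(\sqrt{|b|},\sqrt{|c|})$, which is invertible. Writing $\tilde A_k:=D^{-1}A_k D$ ($k=0,1,\dots,K$) and $\tilde B:=D^{-1}BD$, a direct entrywise $2\times2$ computation shows that both off-diagonal entries of $\tilde A_k$ equal $r_k\sqrt{bc}$ (with sign depending on the common sign of $b,c$, and with the convention $r_0=1$), so every $\tilde A_k$ is symmetric; the same calculation yields $\tilde B=B^T$. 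Consequently $\|\tilde A_k\|=\rho(\tilde A_k)=\rho(A_k)$ for all $k$ and $\|\tilde B\|=\|B\|$.

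Let $k^*\in\{0,1,\dots,K\}$ be an index with $\rho(A_{k^*})=\max_{0\le k\le K}\rho(A_k)$, and consider the length-one word $w^*$ selecting $\tilde A_{k^*}$ inside $\tilde\bA=\{\tilde A_0,\dots,\tilde A_K,\tilde B\}$. It is $(\tilde\bA,1)$-optimal, since $\|\tilde A_k\|=\rho(A_k)\le\rho(A_{k^*})$ for each $k$ and $\|\tilde B\|=\|B\|\le\max_k\rho(A_k)=\rho(A_{k^*})$ by hypothesis, while $\|\tilde A_{k^*}\|=\rho(A_{k^*})$. Because $\tilde A_{k^*}$ is symmetric,
\begin{equation*}
\tilde\bA(w^*)^T\tilde\bA(w^*)=\tilde A_{k^*}^{\,2}\in\tilde\bA^{\,2}.
\end{equation*}
Theorem~\ref{thm1} therefore applies to $\tilde\bA$ and gives $\brho(\tilde\bA)=\rho(\tilde A_{k^*})=\rho(A_{k^*})$. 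Since the generalized spectral radius is invariant under simultaneous conjugation, $\brho(\bA)=\brho(\tilde\bA)=\max_{0\le k\le K}\rho(A_k)$, matching the trivial lower bound $\brho(\bA)\ge\rho(A_{k^*})$, and spectral finiteness transfers from $\tilde\bA$ to $\bA$ through $D$.

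The degenerate case $bc=0$ must be handled separately because $D$ is then singular. Here every matrix of $\bA$ is triangular (upper if $c=0$, lower if $b=0$, diagonal if both vanish), so every product $M\in\bA^n$ is again triangular and its diagonal entries are products of the corresponding diagonal entries of the factors. It follows that $\rho(M)\le\big(\max_{A\in\bA}\max\{|A_{11}|,|A_{22}|\}\big)^n$. For $k=0,\dots,K$ triangularity forces $\max\{|A_{k,11}|,|A_{k,22}|\}=\rho(A_k)$, and for $B$ both diagonal entries are bounded by $\|B\|\le\max_k\rho(A_k)$. Taking $n$-th roots gives $\brho(\bA)\le\max_{0\le k\le K}\rho(A_k)$, and the length-one word selecting $A_{k^*}$ realizes this value.

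The main step is discovering that $D=\mathrm{diag}(\sqrt{|b|},\sqrt{|c|})$ simultaneously symmetrizes the $A_k$'s while merely transposing $B$; once this is in hand, the hypothesis $\|B\|\le\max_k\rho(A_k)$ is exactly what is needed to certify a length-one optimal word inside $\tilde\bA$, so that Theorem~\ref{thm1} does the rest, and only the triangular degenerate case needs bespoke handling.
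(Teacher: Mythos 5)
Your proof is correct and follows essentially the same route as the paper: a diagonal conjugation (your $D^{-1}(\cdot)D$ is the paper's $Q(\cdot)Q^{-1}$ with $q_1/q_2=\sqrt{c/b}$) symmetrizes the $A_k$'s while sending $B$ to $B^T$, after which the hypothesis $\|B\|\le\max_k\rho(A_k)$ certifies a length-one optimal word and Theorem~\ref{thm1} applies. Your explicit triangularity argument for the degenerate case $bc=0$ is a welcome elaboration of the step the paper dismisses as trivial.
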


\begin{proof}
If $bc=0$ then the statement holds trivially. Next, we assume $bc>0$.
Let $k^*\in\{0,1,\dotsc,K\}$ be such that
\begin{equation*}
\rho(A_{k^*})=\max_{0\le k\le K}\rho(A_k),
\end{equation*}
and we put
\begin{equation*}
Q=\begin{pmatrix}q_1&0\\0&q_2\end{pmatrix}
\end{equation*}
which is such that 
$$q_1q_2\not=0\quad \textrm{and}\quad \frac{q_1}{q_2}=\sqrt{\frac{c}{b}}.$$ 
Then,
\begin{gather*}
QA_0Q^{-1}=\begin{pmatrix}a&\sqrt{bc}\\\sqrt{bc}&d\end{pmatrix},\\
QA_1Q^{-1}=\begin{pmatrix}a_{11}&r_1\sqrt{bc}\\r_1\sqrt{bc}&d_{11}\end{pmatrix},\\
\vdots\quad\vdots\quad \vdots\\
QA_KQ^{-1}=\begin{pmatrix}a_{KK}&r_K^{}\sqrt{bc}\\r_K^{}\sqrt{bc}&d_{KK}\end{pmatrix},\\
\intertext{and}
QBQ^{-1}=\begin{pmatrix}b_{11}&r\sqrt{|c|}\\r\sqrt{|b|}&b_{22}\end{pmatrix}=B^T.
\end{gather*}
So,
\begin{equation*}
\rho(A_i)=\|QA_iQ^{-1}\|\quad \textrm{for }0\le i\le K,\quad\textrm{and}\quad\|B^T\|\le\max_{0\le i\le K}\|QA_iQ^{-1}\|.
\end{equation*}
Thus, $w^*=(k^*)$ is a $(Q\bA Q^{-1},1)$-optimal word with $QA_{k^*}Q^{-1}\in Q\bA Q^{-1}$.

From Theorem~\ref{thm1}, this thus proves Corollary~\ref{cor3}.
\end{proof}

Corollary~\ref{cor3} generalizes the first part of Theorem~G stated in Section~\ref{sec1}. A special case of Corollary~\ref{cor3} is the following which is of independent interest.

\begin{cor}\label{cor4}
Let $\bA$ consist of
\begin{equation*}
A=\begin{pmatrix}\lambda_1&0\\0&\lambda_2\end{pmatrix}\quad \textrm{and}\quad B=\begin{pmatrix}a&b\\c&d\end{pmatrix}.
\end{equation*}
If $bc\ge0$, then $\brho(\bA)=\max\{\rho(A), \rho(B)\}$.
\end{cor}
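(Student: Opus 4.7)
The plan is to derive Corollary~\ref{cor4} as a direct consequence of Corollary~\ref{cor3}. The set $\bA$ of Corollary~\ref{cor4} consists of only two matrices, whereas Corollary~\ref{cor3} is stated for a list of $K+2$; I would therefore take $K = 1$ in the setup of Corollary~\ref{cor3}, identify the general matrix $B$ of Corollary~\ref{cor4} with the matrix ``$A_0$'' of Corollary~\ref{cor3} (so the parameters $a, b, c, d$ carry over verbatim), and identify the diagonal matrix $A$ with ``$A_1$'' via the choice $r_1 = 0$, $a_{11} = \lambda_1$, $d_{11} = \lambda_2$, which annihilates the off-diagonal entries $r_1 b$ and $r_1 c$ as required. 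The auxiliary matrix playing the role of ``$B$'' in Corollary~\ref{cor3} I would take to be the zero matrix, which has the required form with $r = b_{11} = b_{22} = 0$.

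With these identifications, the hypotheses of Corollary~\ref{cor3} reduce to $bc \ge 0$, which is exactly the standing assumption of Corollary~\ref{cor4}, and $\|0\| = 0 \le \max\{\rho(A), \rho(B)\}$, which is automatic. Corollary~\ref{cor3} then yields $\brho(\{B, A, 0\}) = \max\{\rho(A), \rho(B)\}$. To finish, I would observe that adjoining the zero matrix to $\{A, B\}$ leaves the generalized spectral radius unchanged: any word that uses the $0$-letter yields the zero matrix and contributes only $0$ to the defining supremum, while $\brho$ is monotone in the generating set. Hence $\brho(\{A, B\}) = \brho(\{A, B, 0\}) = \max\{\rho(A), \rho(B)\}$.

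Alternatively, one can mimic the similarity argument from the proof of Corollary~\ref{cor3} by hand. In the trivial case $bc = 0$, the matrix $B$ is triangular while $A$ is diagonal, so both are simultaneously triangular; every product stays triangular and the Berger-Wang formula immediately yields $\brho(\{A,B\}) = \max\{|\lambda_1|,|\lambda_2|,|a|,|d|\} = \max\{\rho(A),\rho(B)\}$. When $bc > 0$, the diagonal conjugation $Q = \mathrm{diag}(q_1, q_2)$ with $q_1/q_2 = \sqrt{c/b}$ symmetrizes $B$ and fixes $A$, so $Q\{A,B\}Q^{-1}$ consists entirely of symmetric matrices and Theorem~A applies directly. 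I anticipate no substantive obstacle; the real content is simply that the sign condition $bc \ge 0$ is exactly what permits $B$ to be symmetrized by a diagonal similarity that does not disturb the already diagonal $A$.
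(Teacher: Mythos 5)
Your proposal is correct and follows essentially the paper's intended route: Corollary~\ref{cor4} is stated there simply as a special case of Corollary~\ref{cor3}, and your instantiation (with the harmless zero matrix playing the role of the auxiliary ``$B$'', or equivalently your direct diagonal-conjugation argument reducing to Theorem~A) is exactly the specialization the paper has in mind. The only cosmetic simplification available is that you could take the auxiliary matrix to be the diagonal matrix $A$ itself (with $r=0$), avoiding the need to argue that adjoining $0$ leaves $\brho$ unchanged, though your argument for that step is also valid.
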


Now we are naturally concerned with the following.

\begin{prob}\label{prob2}
What can we say for $\bA$ without the constraint condition $bc\ge0$ in Corollary~\ref{cor4}?
\end{prob}

First, a special case might be simply observed as follows.

\begin{prop}\label{prop5}
Let $A,B\in\mathbb{R}^{d\times d}$, where $2\le d<\infty$, be a pair of matrices such that
\bean
A=\begin{pmatrix}a_1&0&\dotsm&0\\0&a_2&\dotsm&0\\\vdots&\vdots&\ddots&\vdots\\0&0&\dotsm&a_d\end{pmatrix}\quad\textrm{and}\quad B=\begin{pmatrix}0&\dotsm&0&b_1\\0&\dotsm&b_2&0\\\vdots&{}&\vdots&\vdots\\b_d&\dotsm&0&0\end{pmatrix}.
\eean
Then $\bA=\{A,B\}$ is such that $\brho(\bA)=\max\{\rho(A), \rho(B)\}$.
\end{prop}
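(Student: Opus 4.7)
The plan is to exploit two commutation identities coming from the sparsity pattern of the pair $(A,B)$. Let $D_b:=\mathrm{diag}(b_1,\dots,b_d)$ and let $J$ denote the $d\times d$ anti-diagonal permutation matrix ($J_{ij}=\delta_{j,d+1-i}$, $J^2=I$), so that $B=D_b J$. Put $\tilde A := JAJ = \mathrm{diag}(a_d,a_{d-1},\dots,a_1)$. A direct check then gives
\begin{equation*}
BA = \tilde A B,\qquad B^2 = D_b\tilde D_b = \mathrm{diag}\bigl(b_i b_{d+1-i}\bigr)_{i=1}^d,
\end{equation*}
and $B^2$, being diagonal, commutes with every diagonal matrix.

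These two relations suffice to put any product into normal form: for an arbitrary $w\in\bK^n$, write $\bA(w) = A^{n_0}BA^{n_1}B\dotsm BA^{n_k}$ where $k$ is the total number of $B$'s and $n_0+\dotsb+n_k=n-k$; I claim
\begin{equation*}
\bA(w) = A^{N_e}\,\tilde A^{N_o}\,B^k,\quad N_e := \sum_{j\text{ even}} n_j,\quad N_o := \sum_{j\text{ odd}} n_j,
\end{equation*}
so that $N_e+N_o+k = n$. This follows by induction on $k$: iterating $BA=\tilde A B$ gives $BA^{n_j}=\tilde A^{n_j} B$, pulling each $A^{n_j}$ past a single $B$ and converting it to $\tilde A^{n_j}$, while the commutativity $B^2 A = AB^2$ pulls each $A^{n_j}$ past pairs of $B$'s unchanged. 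The net effect is that $A^{n_j}$ ends up as $A^{n_j}$ when $j$ is even and as $\tilde A^{n_j}$ when $j$ is odd.

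Set $M := \max\{\rho(A),\rho(B)\}$. If $k=2m$ is even, then $\bA(w)$ is diagonal with $i$-th entry $a_i^{N_e} a_{d+1-i}^{N_o}(b_i b_{d+1-i})^m$, and the termwise bound $|b_ib_{d+1-i}|^{1/2}\le\rho(B)$ yields
\begin{equation*}
\rho(\bA(w))\le\rho(A)^{N_e+N_o}\rho(B)^{2m}\le M^n.
\end{equation*}
If $k=2m+1$ is odd, then $\bA(w)$ is anti-diagonal with $(i,d+1-i)$-entry $a_i^{N_e}a_{d+1-i}^{N_o}(b_ib_{d+1-i})^m b_i$, so $(\bA(w))^2$ is diagonal with $i$-th entry $a_i^{N_e+N_o}a_{d+1-i}^{N_e+N_o}(b_ib_{d+1-i})^{2m+1}$; taking square roots gives $\rho(\bA(w))\le\rho(A)^{N_e+N_o}\rho(B)^{2m+1}\le M^n$.

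By the Berger-Wang formula, $\brho(\bA)=\limsup_{n\to\infty}\max_{w\in\bK^n}\rho(\bA(w))^{1/n}\le M$, while $\brho(\bA)\ge\max\{\rho(A),\rho(B)\}=M$ is immediate; spectral finiteness is then realized by the length-one word $w^*=A$ or $w^*=B$, whichever attains the maximum. The main obstacle is really the bookkeeping in the normal-form reduction: the two commutation identities $BA=\tilde A B$ and $AB^2=B^2 A$ are elementary, but one must carefully track the parity of the number of $B$'s each $A$-block has been pushed past to decide whether it becomes an $A$ or a $\tilde A$ in the final product.
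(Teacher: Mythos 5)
Your argument is correct, and it is complete for all $d$. The identities you use check out: with $B=D_bJ$ one has $BA=\tilde AB$ and $B\tilde A=AB$, so each block $A^{n_j}$ indeed toggles between $A$ and $\tilde A$ as it is pushed past successive $B$'s, yielding the normal form $A^{N_e}\tilde A^{N_o}B^k$; and since $\rho(B)=\max_i\sqrt{|b_ib_{d+1-i}|}$ (read off from $B^2$), your termwise estimates give $\rho(\bA(w))\le M^n$ in both parity cases. Note that you do not even need the Berger--Wang formula for the upper bound: the paper's definition of $\brho$ is already $\sup_n\max_w\rho(\bA(w))^{1/n}$, so $\rho(\bA(w))\le M^n$ for every word immediately gives $\brho(\bA)\le M$, and $n=1$ gives the reverse inequality.

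Your route is genuinely different from the paper's. The paper normalizes $\brho(\bA)=1$, assumes for contradiction that $\rho(A)<1$ and $\rho(B)<1$, and argues that every product $A^{m_1}B^{n_1}\dotsm A^{m_k}B^{n_k}$ tends to $0$ in norm, using the same structural facts (diagonal times anti-diagonal is anti-diagonal, anti-diagonal squared is diagonal) but only writing out the case $d=3$ and leaving the decay estimate as a ``simple computation''; the delicate point there is that $\|B\|>\rho(B)$ is possible, so the decay is not a naive submultiplicativity bound. Your direct normal-form computation replaces this contradiction argument with an explicit evaluation of $\rho(\bA(w))$ for every word, works uniformly in $d$, gives the quantitative bound $\rho(\bA(w))\le M^n$, and makes the spectral finiteness (realization by a length-one word) explicit rather than implicit. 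The only cosmetic blemish is the degenerate case $M=0$, where the inequalities should be read with the convention that an empty product is $1$; the conclusion still holds there since every sufficiently long product then vanishes.
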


\begin{proof}
We will only prove the statement in the case of $d=3$, since the other case may be similarly proved.
By replacing $A$ and $B$ with $A/\brho$ and $B/\brho$ respectively if necessary, there is no loss of generality in assuming
$\brho(\bA)=1$. By contradiction, we assume
\begin{equation*}
\rho(A)=\max\{|a_1|,|a_2|, |a_3|\}<1
\end{equation*}
and
\begin{equation*}
\rho(B)=\max\left\{|b_2|,\sqrt{|b_1b_3|}\right\}<1.
\end{equation*}
Let $\{(m_k,n_k)\}_{k=1}^{+\infty}$ be an arbitrary sequence of positive integer pairs. We claim that
\begin{equation*}
\|A^{m_1}B^{n_1}A^{m_2}B^{n_2}\dotsm A^{m_k}B^{n_k}\|\to0
\end{equation*}
as $k\to+\infty$.

Indeed, the claim follows from the following simple computation:
\begin{gather*}
A^m=\begin{pmatrix}a_1^m&0&0\\0&a_2^m&0\\0&0&a_3^m\end{pmatrix},\quad
B^n=\begin{cases}\begin{pmatrix}(b_1b_3)^{n^\prime}&0&0\\0&b_2^{n^\prime}&0\\0&0&(b_3b_1)^{n^\prime}\end{pmatrix}& \textrm{if }n=2n^\prime,\\\begin{pmatrix}(b_1b_3)^{n^\prime}&0&0\\0&b_2^{n^\prime}&0\\0&0&(b_3b_1)^{n^\prime}\end{pmatrix}B& \textrm{if }n=2n^\prime+1;\end{cases}
\end{gather*}
and for any constants $q_i,c_i, d_i$ for $i=1,2,3$,
\begin{gather*}
\begin{pmatrix}q_1&0&0\\0&q_2&0\\0&0&q_3\end{pmatrix}\begin{pmatrix}0&0&c_1\\0&c_2&0\\c_3&0&0\end{pmatrix}
=\begin{pmatrix}0&0&q_1c_1\\0&q_2c_2&0\\q_3c_3&0&0\end{pmatrix},\\
\intertext{and}
\begin{pmatrix}0&0&c_1\\0&c_2&0\\c_3&0&0\end{pmatrix}
\begin{pmatrix}0&0&d_1\\0&d_2&0\\d_3&0&0\end{pmatrix}=\begin{pmatrix}c_1d_3&0&0\\0&c_2d_2&0\\0&0&c_3d_1\end{pmatrix}.
\end{gather*}
Then, this claim is a contradiction to $\brho(\bA)=1$ and so it implies that $\brho(\bA)=\max\{\rho(A), \rho(B)\}$.
\end{proof}

It should be noted that although $\rho(B)<1$ and $\|A\|<1$ in the proof of Proposition~\ref{prop5} under the contradiction assumption, yet $\|B\|>1$ possibly
happens; for example,
\begin{equation*}
B=\begin{pmatrix}0&0&6/5\\0&4/5&0\\2/5&0&0\end{pmatrix}
\end{equation*}
is such that $\rho(B)=4/5<1$ but $\|B\|=6/5>1$. This is just the nontrivial point of the above proof of Proposition~\ref{prop5}.

For Problem~\ref{prob2}, we cannot, however, expect a general positive solution as disproved by the following counterexample.

\begin{example}\label{example6}
Let
\begin{equation*}
A_0=\alpha\begin{pmatrix} -3&3.5\\-4&4.5
\end{pmatrix}\quad \textrm{and}\quad A_1=\beta\begin{pmatrix}0.5&0\\0&1\end{pmatrix}
\end{equation*}
where $\alpha>0, \beta>0$, and $bc=-14<0$. Then $\bA=\{A_0,A_1\}$ cannot be simultaneously symmetrized and there exists a pair of $\alpha,\beta$ so that $\bA$ has no the spectral finiteness.
\end{example}

\begin{proof}
Putting $Q=\left(\begin{smallmatrix}-0.5& 1 \\  0 & 1\end{smallmatrix}\right)$,
we have
\begin{equation*}
B_0:=Q^{-1}A_0Q=\alpha\begin{pmatrix}1 & 0 \\  2 & 0.5
\end{pmatrix}
\quad\textrm{and}\quad
B_{1}:=Q^{-1}A_1Q=\beta\begin{pmatrix}0.5 & 1 \\  0 & 1\end{pmatrix}.
\end{equation*}
According to Kozyakin~\cite[Theorem~10, Lemma~12 and Theorem~6]{Koz07}, it follows that there always exists a pair of real numbers $\alpha>0,\beta>0$ such that $\{B_0, B_1\}$ and so $\bA$ do not have the spectral finiteness.

Thus, if $\{A_0,A_1\}$ might be simultaneously symmetrized for some pair of $\alpha>0,\beta>0$, then $\{A_0, A_1\}$ and hence $\{B_0, B_1\}$ have the spectral finiteness from Theorem~A, for all $\alpha>0,\beta>0$. This is a contradiction. Therefore, $\{A_0,A_1\}$ cannot be simultaneously symmetrized for all $\alpha>0,\beta>0$.

This proves the statement of Example~\ref{example6}.
\end{proof}

Meanwhile this argument shows that the constraint condition ``$bc\ge0$" in Corollary~\ref{cor3} and even in Corollary~\ref{cor4} is crucial for the spectral finiteness in our situation.

Given an arbitrary set $\bA=\{A_1,\dotsc,A_K\}\subset\mathbb{R}^{d\times d}$, although its periodic stability implies that it is stable almost surely in terms of arbitrary Markovian measures as shown in Dai, Huang and Xiao~\cite{DHX11-aut} for the discrete-time case and in Dai~\cite{Dai-JDE} for the continuous-time case, yet its absolute stability is generally undecidable; see, e.g., Blondel and Tsitsiklis~\cite{BT97, BT00, BT00-aut}.
However, Corollary~\ref{cor3} is equivalent to the statement\,---\,``periodic stability $\Rightarrow$ absolute stability'', under suitable additional conditions.

\begin{prop}\label{prop7}
Let $\bA$ consist of the following $K+2$ matrices:
\begin{equation*}
A_0=\begin{pmatrix}a&b\\c&d\end{pmatrix},\;A_1=\begin{pmatrix}a_{11}&r_1b\\r_1c&d_{11}\end{pmatrix},\;\dotsc,\;A_K=\begin{pmatrix}a_{KK}&r_Kb\\r_Kc&d_{KK}\end{pmatrix},\; \textrm{and}\; B=\begin{pmatrix}b_{11}&r\sqrt{|b|}\\r\sqrt{|c|}&b_{22}\end{pmatrix},
\end{equation*}
where $r, r_1,\dotsc,r_K$ are all constants, such that $bc\ge0$ and $\|B\|\le\max_{0\le i\le K}\rho(A_i)$.
Then $\bA$ is absolutely stable if and only if $\rho(A_k)<1$ for all $0\le k\le K$.
\end{prop}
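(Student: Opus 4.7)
The plan is to reduce Proposition~\ref{prop7} to a direct consequence of Corollary~\ref{cor3}. Recall that by definition, a finite family $\bA$ is \emph{absolutely stable} if and only if $\brho(\bA)<1$, i.e., all infinite products of matrices drawn from $\bA$ decay to $0$. Under the standing hypotheses $bc\ge 0$ and $\|B\|\le\max_{0\le i\le K}\rho(A_i)$, Corollary~\ref{cor3} gives the explicit spectral identity
\begin{equation*}
\brho(\bA)=\max_{0\le k\le K}\rho(A_k).
\end{equation*}
Thus the whole proposition amounts to rewriting this equality as an inequality below $1$.

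For the necessity direction, I would use that the generalized spectral radius always dominates each individual spectral radius: $\rho(A_k)\le\brho(\bA)$ for every $0\le k\le K$. Hence $\brho(\bA)<1$ forces $\rho(A_k)<1$ for each $k$. Note that nothing need be said about $B$ here, though one does get $\rho(B)\le\|B\|\le\max_k\rho(A_k)<1$ as a free byproduct from the norm hypothesis.

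For sufficiency, I would simply invoke Corollary~\ref{cor3} directly: if $\rho(A_k)<1$ for all $0\le k\le K$, then $\max_{0\le k\le K}\rho(A_k)<1$, and so $\brho(\bA)<1$, which is precisely the absolute stability of $\bA$. The point worth emphasising in the write-up is the \emph{meaning} of the equivalence: ``periodic stability'' (the condition $\rho(A_k)<1$ for $0\le k\le K$, involving only the period-$1$ products $A_k$ and not $B$) already implies the much stronger ``absolute stability'', in contrast to the general undecidability situation of Blondel--Tsitsiklis \cite{BT97,BT00,BT00-aut}.

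There is essentially no technical obstacle: the proof is a one-line application of Corollary~\ref{cor3} in each direction. The only subtlety is the interpretive remark that the matrix $B$ is not required to satisfy $\rho(B)<1$ as an independent hypothesis; the norm constraint $\|B\|\le\max_{0\le i\le K}\rho(A_i)$ automatically upgrades the periodic-stability hypothesis on the $A_k$'s to cover $B$ as well, which is what allows Corollary~\ref{cor3} to apply uniformly.
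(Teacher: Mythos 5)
Your argument is correct and is exactly the reasoning the paper intends: the paper's own "proof" merely states the result is obvious, and the details you supply (absolute stability means $\brho(\bA)<1$, Corollary~\ref{cor3} gives $\brho(\bA)=\max_{0\le k\le K}\rho(A_k)$ under the stated hypotheses, and $\rho(A_k)\le\brho(\bA)$ always holds for the converse) are the natural ones. No issues.
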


\begin{proof}
The statement is obvious and we omit the details here.
\end{proof}

In fact, the absolute stability of $\bA$ is decidable in the situation of Theorem~\ref{thm1}.

\section{Kozyakin's model}\label{sec3}

In \cite{Koz07}, Kozyakin systemly considered the spectral finiteness of $\bA$ which consists of the following two matrices:
\begin{equation*}
A_0=\alpha\begin{pmatrix}a&b\\0&1\end{pmatrix}\quad \textrm{and} \quad A_1=\beta\begin{pmatrix}1&0\\c&d\end{pmatrix},
\end{equation*}
where $a,b,c,d,\alpha$, and $\beta$ are all real constants, such that
\begin{equation*}
\alpha,\beta>0\quad \textrm{and}\quad bc\ge1\ge a>0,\;d>0.\leqno{(\mathrm{K})}
\end{equation*}
Let $\brho=\brho(\bA)$. We first note that from \cite{Bar} there exists a Barabanov norm $\pmb{\|}\cdot\pmb{\|}$ on $\mathbb{R}^2$; i.e.,
$$
\brho\pmb{\|}x\pmb{\|}=\max\{\pmb{\|}A_0x\pmb{\|}, \pmb{\|}A_1x\pmb{\|}\}\quad\forall x\in\mathbb{R}^2.
$$
And so for any $x_0\in\mathbb{R}^2\setminus\{0\}$, one can find a corresponding (B-extremal) switching law
$$
\mathfrak{i}_{\bcdot}(x_0)\colon\{1,2,\dotsc\}\rightarrow\{0,1\}
$$
such that
$$\pmb{\|}A_{\mathfrak{i}_n(x_0)}\dotsm A_{\mathfrak{i}_1(x_0)}x_0\pmb{\|}=\pmb{\|}x_0\pmb{\|}\brho^n\quad\forall n\ge1.$$
Then from Kozyakin~\cite[Theorem~6]{Koz07}, it follows that there exists the limit
\begin{equation*}
\sigma(\bA):=\lim_{n\to\infty}\frac{1}{n}\sum_{k=1}^n\mathfrak{i}_k(x_0),
\end{equation*}
called the \textit{switching frequency} of $\bA$, which does not depend on the choices of $x_0$ and the (B-extremal) switching law $\mathfrak{i}_{\bcdot}(x_0)$.

Kozyakin (cf.~\cite[Theorem~10]{Koz07}) asserted that if $\sigma(\bA)$ is irrational, then $\bA$ does not have the spectral finiteness. We now show that this is also necessary.

\begin{theorem}\label{thm8}
Under condition $(\mathrm{K})$, $\bA$ has the spectral finiteness iff its switching frequency $\sigma(\bA)$ is rational.
\end{theorem}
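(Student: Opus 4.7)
The forward direction is exactly Kozyakin's assertion cited in the paragraph preceding the theorem (irrational $\sigma(\bA)$ precludes spectral finiteness), so it suffices to prove the converse: a rational switching frequency implies spectral finiteness. The plan is to lift the B-extremal switching law to a one-dimensional dynamical system on the Barabanov unit sphere, recognise $\sigma(\bA)$ as its Poincar\'e rotation number, and then invoke the classical fact that rational rotation number guarantees a periodic orbit, which in turn produces a finite word realising $\brho(\bA)$.

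After rescaling by $1/\brho(\bA)$ I may assume $\brho(\bA)=1$, so every B-extremal trajectory lies on the unit sphere $S=\{x\in\mathbb{R}^2:\pmb{\|}x\pmb{\|}=1\}$. Define $\Phi\colon S\to S$ by $\Phi(x)=A_{\mathfrak{i}(x)}x$, where $\mathfrak{i}(x)\in\{0,1\}$ realises the Barabanov equality $\pmb{\|}A_{\mathfrak{i}(x)}x\pmb{\|}=\pmb{\|}x\pmb{\|}$; this map is well defined off the finite tie set where $\pmb{\|}A_0x\pmb{\|}=\pmb{\|}A_1x\pmb{\|}=1$. A direct $2\times 2$ computation using $a,d>0$ and $bc\ge 1$ from $(\mathrm{K})$ shows that the projective actions of $A_0$ and $A_1$ are both orientation-preserving on the invariant arc swept out by the extremal orbit, so $\Phi$ descends to a piecewise monotone circle map and possesses a well-defined rotation number $\tau(\Phi)$.

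The next step is the identification $\tau(\Phi)=\sigma(\bA)$, which follows from the Sturmian-type coding of $\Phi$: the asymptotic frequency of $1$'s in the B-extremal itinerary coincides with the rotation number of the monotone map $\Phi$ (compatibly with Kozyakin~\cite[Theorem~10]{Koz07}). Assuming now $\sigma(\bA)=p/q$ in lowest terms, Poincar\'e's theorem yields a point $x^*\in S$ with $\Phi^q(x^*)=x^*$, whose itinerary $w^*=(i_1,\dotsc,i_q)\in\bK^q$ contains exactly $p$ ones. Since $\Phi$ preserves the Barabanov norm along extremal orbits, $\bA(w^*)x^*=\brho(\bA)^q x^*$, so $\rho(\bA(w^*))\ge\brho(\bA)^q$; combined with the trivial reverse inequality $\rho(\bA(w^*))\le\brho(\bA)^q$, this forces $\brho(\bA)=\sqrt[q]{\rho(\bA(w^*))}$, which is exactly spectral finiteness.

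The main obstacle is making the rotation-number argument rigorous for $\Phi$, which is only piecewise continuous at the tie points, whereas the standard Poincar\'e periodic-point theorem requires a circle homeomorphism. I would overcome this by leaning on Kozyakin's Theorem~6, which asserts that $\sigma(\bA)$ is independent of both $x_0$ and of the particular extremal law: this permits each tie to be resolved one-sidedly so as to yield a genuine orientation-preserving map without affecting the value of $\sigma(\bA)$, after which the classical periodic-point theorem applies. A more hands-on fallback would be to write the candidate word $w^*$ directly as the Sturmian word of angle $p/q$ and verify $\rho(\bA(w^*))=\brho(\bA)^q$ by direct computation using the triangular form of $A_0,A_1$ dictated by $(\mathrm{K})$ together with Kozyakin~\cite[Lemma~12]{Koz07}.
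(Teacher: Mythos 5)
Your converse is aimed at the right target but travels a longer and more fragile road than the paper's. The paper's proof does not construct a circle map or a periodic point at all: it simply reads off from Kozyakin's Theorem~6 that a \emph{rational} switching frequency forces some B-extremal switching law to be periodic, say with period $\pi$ and word $(i_1,\dotsc,i_\pi)$, and then observes that the Barabanov identity $\pmb{\|}A_{\mathfrak{i}_n(x_0)}\dotsm A_{\mathfrak{i}_1(x_0)}x_0\pmb{\|}=\pmb{\|}x_0\pmb{\|}\brho^n$ gives $\pmb{\|}(A_{i_\pi}\dotsm A_{i_1})^n\pmb{\|}\ge\brho^{n\pi}$ for all $n$, whence Gel'fand's formula yields $\rho(A_{i_\pi}\dotsm A_{i_1})\ge\brho^{\pi}$; the reverse inequality is the definition of $\brho$. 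The crucial point is that this argument needs only a periodic \emph{word} along which the norm grows like $\brho^{n\pi}$ --- it never requires the trajectory to return to its starting direction, so no eigenvector and no fixed point of any induced map are needed.

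By contrast, your argument hinges on producing a genuine periodic point $x^*$ of the induced map $\Phi$ on the Barabanov sphere, and this is exactly where the soft spot lies: Poincar\'e's periodic-point theorem is a statement about circle \emph{homeomorphisms}, and for a monotone degree-one map with jump discontinuities (which is what $\Phi$ is at the tie set) a rational rotation number does not automatically furnish a true periodic orbit --- the would-be periodic point can land precisely on a discontinuity, leaving only one-sided returns. Your proposed repairs (one-sided resolution of ties, or writing down the Sturmian word of angle $p/q$ and checking it by hand) are plausible and are in the spirit of what Kozyakin actually proves, but they amount to re-deriving the structural content of his Theorem~6 rather than using it. If you instead take from Theorem~6 the periodic extremal law directly and finish with the Gel'fand-formula estimate on $\pmb{\|}(A_{i_\pi}\dotsm A_{i_1})^n\pmb{\|}$, the eigenvector step --- and with it the entire rotation-number apparatus --- becomes unnecessary. (A final cosmetic point: with the paper's convention $\bA(w)=A_{k_1}\dotsm A_{k_n}$, the word extracted from an orbit $\Phi^q(x^*)=A_{i_q}\dotsm A_{i_1}x^*$ is $(i_q,\dotsc,i_1)$, not $(i_1,\dotsc,i_q)$.)
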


\begin{proof}
If $\sigma(\bA)$ is an irrational number, then \cite[Theorem~10]{Koz07} follows that $\bA$ does not have the spectral finiteness.
Next, assume $\sigma(\bA)$ is rational. Then \cite[Theorem~6]{Koz07} implies that one can find some $x_0\in\mathbb{R}^2\setminus\{0\}$ and a corresponding
periodic switching law, say
\begin{equation*}
\mathfrak{i}_{\bcdot}(x_0)=(\uwave{i_1i_2\dotsm i_\pi}\,\uwave{i_1i_2\dotsm i_\pi}\,\uwave{i_1i_2\dotsm i_\pi}\,\dotsm),
\end{equation*}
such that
$$\pmb{\|}A_{\mathfrak{i}_n(x_0)}\dotsm A_{\mathfrak{i}_1(x_0)}x_0\pmb{\|}=\pmb{\|}x_0\pmb{\|}\brho^n\quad\forall n\ge1,$$
where $\brho=\brho(\bA)$. Therefore, it holds that
$$\pmb{\|}(A_{i_\pi}\dotsm A_{i_1})^n\pmb{\|}\ge\brho^{n\pi}\quad\forall n\ge1.$$
Moreover, from the classical Gel'fand spectral formula we have
\bean
\rho(A_{i_\pi}\dotsm A_{i_1})=\lim_{n\to\infty}\sqrt[n]{\pmb{\|}(A_{i_\pi}\dotsm A_{i_1})^n\pmb{\|}}\ge\brho^{\pi}.
\eean
Thus, $\brho(\bA)=\sqrt[\pi]{\rho(A_{i_\pi}\dotsm A_{i_1})}$, which means the spectral finiteness.

This completes the proof of Theorem~\ref{thm8}.
\end{proof}

This result improves \cite[Theorem~10]{Koz07} and it should be convenient for applications. Let us consider an explicit example.

\begin{example}\label{example9}
Let $\bB=\{B_0,B_1\}$ be such that
\bean
B_0=\begin{pmatrix}a&b\\0&1\end{pmatrix}\quad \textrm{and} \quad B_1=\begin{pmatrix}1&0\\c&d\end{pmatrix},
\eean
where $a,b,c,d\in\mathbb{R}$.
\end{example}

We will divide our arguments into several cases.

1). If $ad=0$ then we have either $\mathrm{rank}(B_0)=1$ or $\mathrm{rank}(B_1)=1$ and so $\bB$ has the spectral finiteness from Theorem~H stated in Section~\ref{sec1}.

2). If $bc=0$ then $\bB$ has the spectral finiteness from Corollary~\ref{cor4} stated in Section~\ref{sec2}.

3). If $a<0$ and $d<0$, then $\bB$ has the spectral finiteness from Theorem~E stated in Section~\ref{sec1}.

4). If $a=d$ and $b=c$, then $\bB$ has the spectral finiteness from Theorem~F stated in Section~\ref{sec1} such that $\brho(B)=\max\left\{\rho(B_0), \sqrt{\rho(B_0B_1)}\right\}$.

5). Next, let $ad\not=0, bc\not=0$, and define
\begin{equation*}
Q=\begin{pmatrix}\frac{a-1}{b}&1\\0&1\end{pmatrix}.
\end{equation*}
When $a\not=1$, we can obtain that
\begin{equation*}
QB_0Q^{-1}=\begin{pmatrix}a&0\\0&1\end{pmatrix}\quad \textrm{and}\quad QB_1Q^{-1}=\begin{pmatrix}1+\frac{bc}{a-1}&\frac{(d-1)(a-1)-bc}{a-1}\\\frac{bc}{a-1}&d-\frac{bc}{a-1}\end{pmatrix}.
\end{equation*}
Note that
\begin{equation*}
\frac{(d-1)(a-1)-bc}{a-1}\times\frac{bc}{a-1}\ge0\quad
\mathrm{iff}\quad
[(1-a)(1-d)-bc]\times bc\ge0.
\end{equation*}
Hence, if
\begin{equation*}
[(1-a)(1-d)-bc]\times bc\ge0,
\end{equation*}
then from Corollary~\ref{cor4}, it follows that $\bB$ has the spectral finiteness such that 
$$\brho(B)=\max\{\rho(B_0), \rho(B_1)\}.$$

6). If $a=d=1$ and $bc\ge1$, then $\bB$ has the spectral finiteness from Theorem~\ref{thm8}.
Indeed in this case, \cite[Lemma~12]{Koz07} implies that the switching frequency $\sigma(\bB)=\frac{1}{2}$ is rational, and then Theorem~\ref{thm8}
implies the spectral finiteness of $\bB$.

We notice that our cases 1)\,--\,5) are beyond Kozyakin's condition $(\mathrm{K})$.
\section*{\textbf{Acknowledgments}}%
\noindent The author would like to thank professors Y.~Huang and M.~Xiao for some helpful discussion, and is particularly grateful to professor Victor Kozyakin for his helpful comments to Theorem~\ref{thm8}.
\bibliographystyle{amsplain}

\end{document}